\documentclass[a4paper,twoside,english,10pt]{amsart}
\usepackage{lmodern} 
\usepackage[margin=1.3in]{geometry} 
\usepackage{graphicx}              
\usepackage{amsmath}              
\usepackage{amsfonts}              
\usepackage{amsthm}                
\usepackage{verbatim}              
\usepackage{indentfirst}           
\usepackage{underscore}
\usepackage{enumitem}
\usepackage{mathtools} 
\usepackage{amssymb}
\usepackage{hyperref}
\usepackage{xcolor}
\usepackage[all]{xy}
\usepackage[utf8]{inputenc}
\newtheorem{thm}{Theorem}
\newtheorem{lem}[thm]{Lemma}

\newtheorem{cor}[thm]{Corollary}

\theoremstyle{remark}
       \newtheorem*{rmk}{Remark}
\theoremstyle{remark}

 \newcommand{\ord}{\operatorname{ord}}  
    
    \newcommand{\aff}{\operatorname{Aff}}  
  
\usepackage{amssymb,fge}

\title{Free semigroups of power series}
\author{Wade Hindes}
\begin{document}
\begin{abstract} Given power series $f_1,\dots,f_r\in z^2K[[z]]$ over a field $K$, we show that the semigroup $\langle f_1,\dots,f_r\rangle$ generated by the $f$'s under composition is free of rank $r$ whenever the multiplicative semigroup $\langle\textup{lc}(f_1),\dots,\textup{lc}(f_r)\rangle$ in $K^\times$ is free commutative of rank $r$; here $\textup{lc}(f)$ denotes the first nonzero coefficient of $f$. To do this, we associate an affine linear transformation to every $F\in\langle f_1,\dots,f_r\rangle$ and then apply a ping-pong lemma. In particular, combining this with \cite{PappalardiShaShparlinskiStewart2018} implies that most semigroups of polynomials over a number field $K$ are free, and we make this statement precise through a counting argument.     
\end{abstract}
\maketitle

Let $K$ be a field and let $K[[z]]$ be the ring of formal power series in one variable over $K$. Then, given a nonzero $f=\sum_{n\geq0} a_nz^n\in K[[z]]$, we let $d=\min\{n: a_n\neq0\}$ and define $\textup{lc}(f):=a_d$ and $\ord(f):=d$ to be the leading coefficient and order vanishing at $z=0$ of $f$, respectively. Given $f_1,\dots,f_r\in z\,K[[z]]$ we let $\langle f_1,\dots,f_r\rangle$ denote the semigroup of power series generated by the $f$'s under composition. Finally, for $\mathbf{a}=(a_1,\dots,a_r)\in(K^\times)^r$ and $\mathbf{n}\in\mathbb{Z}^r$, write $\mathbf{a}^{\mathbf{n}}:=a_1^{n_1}\cdots a_r^{n_r}$ and say that $a_1,\dots,a_r$ are \emph{multiplicatively independent} if
$\mathbf{a}^{\mathbf{n}}=1$ implies $\mathbf{n}=\mathbf{0}$. Equivalently, the multiplicative semigroup $\langle a_1,\dots,a_r\rangle$ in $K^\times$ generated by the $a$'s is free commutative of rank $r$. Then we have the following result; compare to results in \cite{beaumont2025uniformtitsalternativeendomorphisms,MR4780496, Zieve} for semigroups of rational functions on two generators and to \cite{pakovich2026right} for other structural results on semigroups of power series.  
\begin{thm}\label{thm:free+power+series} 
Let $K$ be a field and let $f_1,\dots,f_r\in z^2K[[z]]$ be nonzero. If the leading coefficients $\textup{lc}(f_1),\dots,\textup{lc}(f_r)$ are multiplicatively independent elements of $K^\times$, then $\langle f_1,\dots,f_r\rangle$ is a free semigroup of rank $r$.  
\end{thm}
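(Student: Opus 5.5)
The plan is to attach to each $F\in\langle f_1,\dots,f_r\rangle$ an affine map of $\mathbb{R}^r$, show that $F\mapsto A_F$ turns composition into composition, and then show that the affine maps attached to the generators generate a free semigroup, using a ping-pong argument on their inverses.

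\emph{The affine maps.} Write $a_i=\textup{lc}(f_i)$, $\mathbf{a}=(a_1,\dots,a_r)$ and $d_i=\ord(f_i)\geq 2$. For nonzero $f,g\in zK[[z]]$ we have
\[
\ord(f\circ g)=\ord(f)\ord(g),\qquad \textup{lc}(f\circ g)=\textup{lc}(f)\,\textup{lc}(g)^{\ord(f)}.
\]
By induction on word length, every $F$ in the semigroup, together with the identity, is nonzero and satisfies $\textup{lc}(F)=\mathbf{a}^{\mathbf{n}(F)}$ for some $\mathbf{n}(F)\in\mathbb{Z}_{\geq0}^r$. Because the $a_i$ are multiplicatively independent, $\mathbf{n}(F)$ is uniquely determined by $F$. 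Define $A_F(\mathbf{x})=\ord(F)\,\mathbf{x}+\mathbf{n}(F)$. The displayed formulas give $\mathbf{n}(F\circ G)=\ord(F)\,\mathbf{n}(G)+\mathbf{n}(F)$, so $A_{F\circ G}=A_F\circ A_G$. In particular $A_{f_i}(\mathbf{x})=d_i\mathbf{x}+\mathbf{e}_i$, where $\mathbf{e}_i$ is the $i$th standard basis vector. It therefore suffices to show that distinct words in $A_1,\dots,A_r$ (with $A_i=A_{f_i}$) give distinct affine maps. These maps are invertible, so we may equivalently work with the contractions $B_i=A_i^{-1}$, given by $B_i(\mathbf{x})=(\mathbf{x}-\mathbf{e}_i)/d_i$. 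A relation among words in the $A_i$ becomes the reversed relation among words in the $B_i$.

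\emph{Ping-pong.} Set $c_j=1/(d_j-1)$ and $X=\prod_{j=1}^r(-c_j,0]$.

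\begin{enumerate}[label=(\roman*)]
\item $B_i(X)\subseteq X$. The $i$th coordinate of $B_i(X)$ lies in $(-(c_i+1)/d_i,\,-1/d_i]$, and $(c_i+1)/d_i=c_i$. For $j\neq i$, the $j$th coordinate lies in $(-c_j/d_i,0]\subseteq(-c_j,0]$.
\item $B_i(X)\cap B_j(X)=\emptyset$ for $i\neq j$. On the $i$th coordinate, $B_i(X)$ lies in $(-c_i,-1/d_i]$, while $B_j(X)$ lies in $(-c_i/d_j,0]$. Since $c_i/d_j\leq c_i/2=\tfrac{1}{2(d_i-1)}\leq 1/d_i$ exactly when $d_i\geq2$, these half-open intervals are disjoint.
\item $B_i(X)\neq X$. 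Points of $X$ with $i$th coordinate close to $0$ are not in $B_i(X)$.
\end{enumerate}

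Now suppose two distinct words $W\neq W'$ in the $B_i$ give the same map. Since the maps are bijections, we can cancel a common prefix.
\begin{itemize}
\item If both words are still nonempty, they begin with different letters $B_i$ and $B_j$. Then (i) gives $B_W(X)\subseteq B_i(X)$ and $B_{W'}(X)\subseteq B_j(X)$, and (ii) says these sets are disjoint. This is a contradiction, because both images are nonempty.
\item If one word becomes empty, the other is a nonempty word equal to the identity. Then $X=B_W(X)\subseteq B_i(X)$, contradicting (iii). More simply, the order of a nonempty word is at least $2$, so it cannot be the identity.
\end{itemize}
Hence $\langle A_1,\dots,A_r\rangle$ is free of rank $r$. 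Because $F\mapsto A_F$ is a semigroup homomorphism sending $f_i\mapsto A_i$, the semigroup $\langle f_1,\dots,f_r\rangle$ is free of rank $r$ as well.

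The main obstacle is the ping-pong step: choosing the box $X$ so that the images $B_i(X)$ are both invariant and pairwise disjoint. The delicate case is $d_i=d_j=2$, where the closed intervals $[-1,-1/2]$ and $[-1/2,0]$ touch at $-1/2$. Using half-open intervals, with the specific radii $c_j=1/(d_j-1)$, is what makes both conditions hold at once. Everything else is bookkeeping: checking that $\mathbf{n}(F)$ is well defined, which uses multiplicative independence, and that $F\mapsto A_F$ is multiplicative.
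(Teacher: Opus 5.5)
Your argument is correct and is essentially the paper's proof: your contractions $B_i=A_{f_i}^{-1}$ are exactly the paper's maps $\lambda(f_i)(\mathbf{t})=(\mathbf{t}+\mathbf{e}_i)/d_i$ conjugated by $\mathbf{x}\mapsto-\mathbf{x}$, and your box $\prod_j(-c_j,0]$ with half-open image intervals plays the same role as the paper's $X=\prod_j(0,2/d_j)$ split into $(0,1/d_i]$ and $(1/d_i,2/d_i)$. The one minor difference is in the cancellation step: you cancel common letters inside the affine semigroup, which shows that $\langle A_1,\dots,A_r\rangle$ is itself free, whereas the paper uses ping-pong to identify the last letter and then applies right cancellation of power series from Lemma~\ref{lem:basic}.
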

\begin{rmk}
After completion of this paper, Fedor Pakovich pointed out a second proof of Theorem \ref{thm:free+power+series}, based on the coefficient estimates in the proof of \cite[Theorem 2.3]{pakovich2022sharing}. With his permission, we include this argument at the end of the paper.
\end{rmk}
We break the proof into pieces in hopes that it may help formulate generalizations. In what follows, $\textup{lt}(f):=\textup{lc}(f)z^{\ord(f)}$ denotes the leading term of a nonzero $f\in z K[[z]]$.  
\begin{lem}\label{lem:basic} Let $f,g\in zK[[z]]$ be nonzero and write $f=\sum_{n\geq1}a_nz^n$ and $g=\sum_{m\geq1}b_mz^m$ for some $a_n,b_m\in K$. Then the composition $f\circ g$ given by formal substitution,  
\[f\circ g=\sum_{n\geq1}a_n\Big(\sum_{m\geq1}b_mz^m\Big)^n,\]
yields a well defined element of $zK[[z]]$. In particular, we have that \vspace{.1cm} 
\[
\ord(f\circ g)=\ord(f)\cdot \ord(g),\;\;\;\textup{lt}(f\circ g)=\textup{lt}(f)\circ\textup{lt}(g),\;\;\;\text{and}\;\;\;\textup{lc}(f\circ g)=\textup{lc}(f)\cdot\textup{lc}(g)^{\ord(f)}
 \vspace{.1cm} 
\] 
for all $f,g\in zK[[z]]$. Moreover, the composition of power series is right cancellative: if $f\circ g=h\circ g$ for some nonzero $f,g,h\in zK[[z]]$, then $f=h$.   
\end{lem}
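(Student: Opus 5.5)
We prove the three parts of Lemma~\ref{lem:basic} in turn: well-definedness of $f\circ g$, the formulas for order, leading term and leading coefficient, and right cancellation. The organizing principle is to track the lowest-degree term at each step.

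For well-definedness, write $d=\ord(g)\ge 1$. Then $g^n\in z^{dn}K[[z]]\subseteq z^nK[[z]]$. So for each fixed $N$, the coefficient of $z^N$ in $\sum_n a_n g^n$ receives contributions only from $n\le N$. It is therefore a finite sum of products of coefficients, and $f\circ g$ is a well-defined element of $zK[[z]]$.

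For the formulas, write $e=\ord(f)$ and $g=\textup{lt}(g)+g_1$ with $\ord(g_1)>d$ (or $g_1=0$). Then
\[g^n=\textup{lc}(g)^nz^{dn}+(\text{terms of order}>dn).\]
Since $K[[z]]$ is an integral domain, the term $a_eg^e$ has order exactly $de$ and leading term $a_e\textup{lc}(g)^ez^{de}$. Every other summand $a_ng^n$ with $n>e$ has order at least $dn>de$. Hence $\ord(f\circ g)=de$ and $\textup{lt}(f\circ g)=a_e\textup{lc}(g)^ez^{de}=\textup{lt}(f)\circ\textup{lt}(g)$. Taking coefficients gives $\textup{lc}(f\circ g)=\textup{lc}(f)\textup{lc}(g)^{\ord(f)}$.

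For right cancellation, the key step is to prove that $F\circ g=0$ implies $F=0$ for all $F\in zK[[z]]$. This is the contrapositive of the order formula just shown: if $F\neq0$ and $g\ne 0$, then $\ord(F\circ g)=\ord(F)\ord(g)$ is finite, so $F\circ g\neq0$. (The formula was derived for nonzero $f$, but the argument itself never uses $f\neq 0$ beyond selecting $e$.) We then need linearity in the first slot, $(f-h)\circ g=f\circ g-h\circ g$, which is immediate from the defining sum since substitution is coefficientwise linear in $f$. Applying the key step with $F=f-h$ gives $f=h$.

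The only real subtlety is this linearity remark together with the convention that $\ord(0)$ is infinite; the rest is routine bookkeeping of lowest-order terms.
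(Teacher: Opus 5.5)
Your proposal is correct and matches the paper's argument: the order, leading-term and leading-coefficient formulas come from isolating the lowest-degree term $a_e\,\textup{lc}(g)^e z^{de}$. Right cancellation then follows from linearity in the outer argument together with the order formula applied to $f-h\neq 0$. You simply supply details the paper leaves as a ``straightforward calculation'', such as well-definedness via $g^n\in z^nK[[z]]$.
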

\begin{proof}
The claim about orders and leading coefficients is a straightforward calculation using only the definition of composition. Likewise, composition of series is linear in the outer argument, so that if $f\circ g= h\circ g$ then $(f-h)\circ g=0$ in $K[[z]]$. But then if $f-h$ is non-zero, it must be the case that $\ord((f-h)\circ g)=\ord(f-h)\cdot \ord(g)$, and we obtain a contradiction.   
\end{proof}
Next, we set up the ping-pong action using valuations and affine linear transformations; see \cite[\S II.B]{de2000topics} for a classic version of the ping-pong lemma for semigroups and see \cite{kolpakov2022free} for examples coming from affine linear transformations. In what follows, $\langle \textup{lc}(f_1),\dots,\textup{lc}(f_r)\rangle$ denotes the multiplicative semigroup in $K^\times$ generated by the leading coefficients of the $f$'s. 
\begin{lem}\label{lem:ping-pong}
Let $f_1,\dots,f_r$ satisfy the conditions of Theorem
\ref{thm:free+power+series}, and let $\mathcal{S}=\langle f_1,\dots,f_r\rangle$.
Then the functions $v_1,\dots,v_r$ defined by $v_i(\textup{lc}(f_i))=1$ and
$v_i(\textup{lc}(f_j))=0$ for $i\neq j$ extend uniquely to homomorphisms
\vspace{.1cm}
\[
v_1,\dots,v_r:\langle\textup{lc}(f_1),\dots,\textup{lc}(f_r)\rangle\rightarrow\mathbb{Z}
\vspace{.1cm}
\]
from the multiplicative semigroup generated by the leading coefficients to
the additive group $\mathbb{Z}$. Moreover, $\textup{lc}(F)\in
\langle\textup{lc}(f_1),\dots,\textup{lc}(f_r)\rangle$ for all
$F\in\mathcal{S}$, so for each $1\leq i\leq r$ and $F\in\mathcal{S}$ we may
define the affine linear transformation
\vspace{.05cm}
\[
\lambda_i(F)(t):=\frac{1}{\ord(F)}\,t+\frac{v_i(\textup{lc}(F))}{\ord(F)}
\in\aff(\mathbb{R}).
\vspace{.05cm}
\]
Then the following statements hold.
\vspace{.2cm}
\begin{enumerate}
\item[\textup{(1)}] For all $1\leq i\leq r$, the map
$\lambda_i:\mathcal{S}\rightarrow\aff(\mathbb{R})$ given by
$F\mapsto\lambda_i(F)$ is an antihomomorphism:
\[\lambda_i(F\circ G)=\lambda_i(G)\circ\lambda_i(F)\]
for all $F,G\in\mathcal{S}$. Hence, the product map
$\lambda:\mathcal{S}\rightarrow\aff(\mathbb{R})^r$ given by
\[\lambda(F)=(\lambda_1(F),\dots,\lambda_r(F))\] 
is also an antihomomorphism,
where $\aff(\mathbb{R})^r$ acts on $\mathbb{R}^r$ coordinatewise.
\vspace{.3cm}
\item[\textup{(2)}] Define intervals $U_i=\big(0,\frac{2}{\ord(f_i)}\big)$,
$A_i=\big(0,\frac{1}{\ord(f_i)}\big]$, and
$B_i=\big(\frac{1}{\ord(f_i)},\frac{2}{\ord(f_i)}\big)$. Then
\[
\lambda_i(f_j)(U_i)\subseteq
\begin{cases} A_i & i\neq j, \\
              B_i & i=j.
\end{cases}
\vspace{.25cm}
\]
\item[\textup{(3)}] Define the sets $X=\prod_{i=1}^rU_i$ and
$X_i=\prod_{j=1}^rY_j$, where
\[
Y_j=\begin{cases} A_j & j\neq i, \\
                  B_j & j=i.
\end{cases}
\]
Then $X_i\subseteq X$ and $\lambda(f_i)(X)\subseteq X_i$ for all $i$, and
$X_i\cap X_j=\varnothing$ for all $i\neq j$.
\end{enumerate}
\end{lem}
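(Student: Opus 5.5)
The plan has three parts: first set up the homomorphisms $v_i$ and check that leading coefficients stay in the semigroup, then verify the antihomomorphism identity (1), and finally reduce (2) and (3) to interval arithmetic.

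\textbf{The maps $v_i$ and leading coefficients.} Since $\textup{lc}(f_1),\dots,\textup{lc}(f_r)$ are multiplicatively independent, every element of $\langle\textup{lc}(f_1),\dots,\textup{lc}(f_r)\rangle$ can be written uniquely as $\mathbf{a}^{\mathbf{n}}$ with $\mathbf{n}\in\mathbb{Z}_{\geq0}^r$. Indeed, two representations $\mathbf{a}^{\mathbf{n}}=\mathbf{a}^{\mathbf{m}}$ give $\mathbf{a}^{\mathbf{n}-\mathbf{m}}=1$, hence $\mathbf{n}=\mathbf{m}$. Setting $v_i(\mathbf{a}^{\mathbf{n}})=n_i$ is then well defined and is a homomorphism. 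It is unique because any homomorphism is determined by its values on the generators.

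That $\textup{lc}(F)$ lies in the semigroup follows by induction on word length from Lemma \ref{lem:basic}, using $\textup{lc}(f\circ g)=\textup{lc}(f)\,\textup{lc}(g)^{\ord(f)}$ with $\ord(f)\geq1$.

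\textbf{Part (1).} This is a direct computation from Lemma \ref{lem:basic}. Write $d_F=\ord(F)$. Then
\[v_i(\textup{lc}(F\circ G))=v_i(\textup{lc}(F))+d_F\,v_i(\textup{lc}(G)),\qquad d_{F\circ G}=d_Fd_G.\]
Next I compute the other side:
\[\lambda_i(G)(\lambda_i(F)(t))=\frac{1}{d_G}\Big(\frac{t+v_i(\textup{lc}(F))}{d_F}\Big)+\frac{v_i(\textup{lc}(G))}{d_G}=\frac{t+v_i(\textup{lc}(F))+d_F\,v_i(\textup{lc}(G))}{d_Fd_G}.\]
This is exactly $\lambda_i(F\circ G)(t)$. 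The statement for the product map $\lambda$ follows coordinatewise.

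\textbf{Part (2).} Here $\lambda_i(f_j)(t)=(t+\delta_{ij})/d_j$, where $d_j=\ord(f_j)$. I will use the hypothesis $d_j\geq2$ (that is, $f_j\in z^2K[[z]]$) together with $t\in(0,2/d_i)$.

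When $i\neq j$, the image $t/d_j$ lies in $(0,2/(d_id_j)]$, which is contained in $(0,1/d_i]$ because $d_j\geq2$. (In fact the upper bound is attained only in the limit.)

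When $i=j$, the image $(t+1)/d_i$ lies in $(1/d_i,(2/d_i+1)/d_i)$. Since $d_i\geq2$ gives $2/d_i\leq1$, this is contained in $(1/d_i,2/d_i)=B_i$.

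This is the step where the hypothesis $\ord\geq2$ is genuinely used, and it is the only real obstacle. It amounts to checking these two inequalities carefully, including which endpoints are open or closed.

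\textbf{Part (3).} This follows formally from (2).

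First, $A_j,B_j\subseteq U_j$, so $X_i\subseteq X$.

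Second, $\lambda(f_i)$ acts coordinatewise. Its $j$-th coordinate sends $U_j$ into $A_j$ if $j\neq i$ and into $B_j$ if $j=i$, so $\lambda(f_i)(X)\subseteq X_i$.

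Third, take $i\neq j$. The $i$-th coordinate of $X_i$ is $B_i$, while the $i$-th coordinate of $X_j$ is $A_i$. Since $A_i\cap B_i=\varnothing$, it follows that $X_i\cap X_j=\varnothing$.
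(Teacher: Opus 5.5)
Your proposal is correct and follows the paper's proof essentially step for step: the unique exponent representation defining $v_i$, induction on word length via $\textup{lc}(f\circ g)=\textup{lc}(f)\,\textup{lc}(g)^{\ord(f)}$, the direct composition computation for (1), monotonicity together with $d_j\geq 2$ for (2), and the coordinatewise argument using $A_i\cap B_i=\varnothing$ for (3). The only cosmetic point is that in the case $i\neq j$ the image is actually the open interval $(0,2/(d_id_j))$, as you remark, which leaves the containment in $A_i$ intact.
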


\begin{proof}
Since $\textup{lc}(f_1),\dots,\textup{lc}(f_r)$ are multiplicatively
independent, every element of the multiplicative semigroup 
$\langle\textup{lc}(f_1),\dots,\textup{lc}(f_r)\rangle$ can be written
uniquely as $\textup{lc}(f_1)^{n_1}\cdots\textup{lc}(f_r)^{n_r}$ with
$(n_1,\dots,n_r)\in\mathbb{N}^r\setminus\{\mathbf{0}\}$. Hence $v_i$ is well defined by
$v_i\big(\textup{lc}(f_1)^{n_1}\cdots\textup{lc}(f_r)^{n_r}\big)=n_i$, and
this is clearly the unique extension to a homomorphism. Next, by Lemma \ref{lem:basic} we have
\begin{equation}\label{eq:recap}
\ord(F\circ G)=\ord(F)\cdot\ord(G)
\qquad\text{and}\qquad
\textup{lc}(F\circ G)=\textup{lc}(F)\cdot\textup{lc}(G)^{\ord(F)}
\end{equation}
for all $F,G\in\mathcal{S}$. Thus, induction on word length shows that $\textup{lc}(F)\in\langle\textup{lc}(f_1),\dots,\textup{lc}(f_r)\rangle$ and $\ord(F)\geq2$ for all $F\in\mathcal{S}$, so each $\lambda_i(F)$ is well defined.

For statement (1), fix $i$ and write $d_F=\ord(F)$ and
$w_F=v_i(\textup{lc}(F))$ for $F\in\mathcal{S}$. Since $v_i$ is a
homomorphism, \eqref{eq:recap} gives $d_{F\circ G}=d_Fd_G$ and
$w_{F\circ G}=w_F+d_Fw_G$. Hence
\[
\lambda_i(F\circ G)(t)=\frac{t+w_F+d_Fw_G}{d_Fd_G}
=\frac{1}{d_G}\Big(\frac{t+w_F}{d_F}\Big)+\frac{w_G}{d_G}
=\lambda_i(G)\big(\lambda_i(F)(t)\big).
\]
Since composition in $\aff(\mathbb{R})^r$ is coordinatewise, $\lambda$ is also an antihomomorphism.

For statement (2), write $d_j=\ord(f_j)$, and note that $d_j\geq2$ for all $j$. By the definition of $v_1,\dots,v_r$, we have
\[
\lambda_i(f_i)(t)=\frac{t+1}{d_i}
\qquad\text{and}\qquad
\lambda_i(f_j)(t)=\frac{t}{d_j}\quad\text{for $j\neq i$}.
\]
Now let $t\in U_i$, so that $0<t<2/d_i$. Since $\lambda_i(f_i)$ is
strictly increasing and $d_i\geq2$,
\[
\frac{1}{d_i}=\lambda_i(f_i)(0)<\lambda_i(f_i)(t)
<\lambda_i(f_i)\Big(\frac{2}{d_i}\Big)=\frac{2}{d_i^2}+\frac{1}{d_i}
\leq\frac{2}{d_i}.
\]
Therefore, $\lambda_i(f_i)(U_i)\subseteq B_i$. Likewise, for $j\neq i$, since $\lambda_i(f_j)$ is strictly increasing and $d_j\geq2$,
\[0=\lambda_i(f_j)(0)<\lambda_i(f_j)(t)<\lambda_i(f_j)\Big(\frac{2}{d_i}\Big)=\frac{2}{d_id_j}\leq\frac{1}{d_i}.\]
Hence, $\lambda_i(f_j)(U_i)\subseteq A_i$ as claimed.

For statement (3), note that $A_j,B_j\subseteq U_j$ for all $j$, so
$X_i\subseteq X$. If $\mathbf{t}=(t_1,\dots,t_r)\in X$, then the $j$-th coordinate of $\lambda(f_i)(\mathbf{t})$ is $\lambda_j(f_i)(t_j)$, which lies in $B_j$ if $j=i$ and in $A_j$ if $j\neq i$ by statement (2). Hence $\lambda(f_i)(X)\subseteq X_i$. Finally, if $i\neq j$, then the $i$-th factor of $X_i$ is $B_i$ while the $i$-th factor of $X_j$ is $A_i$; since $A_i\cap B_i=\varnothing$, we conclude that $X_i\cap X_j=\varnothing$.
\end{proof}
\begin{proof}[Proof of Theorem \ref{thm:free+power+series}]
We show by induction on $m\geq1$ that if
$\theta_1\circ\dots\circ\theta_n=\tau_1\circ\dots\circ\tau_m$ with $n\geq m$
and $\theta_k,\tau_l\in\{f_1,\dots,f_r\}$, then $n=m$ and $\theta_k=\tau_k$
for all $k$. Applying $\lambda$ and using Lemma \ref{lem:ping-pong} part (1) gives
\[
\lambda(\theta_n)\circ\dots\circ\lambda(\theta_1)
=\lambda(\tau_m)\circ\dots\circ\lambda(\tau_1).
\]
Write $\theta_n=f_i$ and $\tau_m=f_j$, and fix $\mathbf{p}\in X$. Since $\lambda(f_k)(X)\subseteq X_k\subseteq X$ for all $k$ by Lemma
\ref{lem:ping-pong} part (3), evaluating both sides at $\mathbf{p}$ produces a point of $X_i\cap X_j$. Hence $i=j$, that is, $\theta_n=\tau_m$. If $m=1$ and $n>1$, then right cancellation (Lemma \ref{lem:basic}) applied to $(\theta_1\circ\dots\circ\theta_{n-1})\circ\theta_n=z\circ\theta_n$ gives $\theta_1\circ\dots\circ\theta_{n-1}=z$, which is impossible since the left side has order at least $2$; so $n=1$. If $m>1$, then right cancellation implies that $\theta_1\circ\dots\circ\theta_{n-1}=\tau_1\circ\dots\circ\tau_{m-1}$, from which the result follows by induction.
\end{proof}
In particular, we may apply the result above to semigroups of rational functions with a common super-attracting fixed point. Recall that a fixed point $P\in\mathbb{C}$ of $f\in\mathbb{C}(z)$ is
\emph{super-attracting} if $f'(P)=0$. If $f(\infty)=\infty$, we say that $\infty$ is super-attracting if $0$ is a super-attracting fixed point of $1/f(1/z)$. Since the multiplier of a fixed point is invariant under conjugation by $\textup{PGL}_2(\mathbb{C})$ (see \cite[\S1.3]{SilvDyn}), a fixed point $P\in\mathbb{P}^1(\mathbb{C})$ of $f$ is super-attracting if and only if $0$ is a super-attracting fixed point of $L^{-1}\circ f\circ L$ for some (equivalently, every) $L\in\textup{PGL}_2(\mathbb{C})$ with
$L(0)=P$. Before we state our result on semigroups of rational functions, we need the following technical facts about super-attracting fixed points.

\begin{lem}\label{lem:taylor}
Let $P\in\mathbb{P}^1(\mathbb{C})$, and let $L\in\textup{PGL}_2(\mathbb{C})$ be such that $L(0)=P$. Suppose that $f,g\in\mathbb{C}(z)$ are non-constant and that $P$ is a super-attracting fixed point of both $f$ and $g$. Set $F:=L^{-1}\circ f\circ L$ and $G:=L^{-1}\circ g\circ L$. Then the following statements hold:
\vspace{.1cm}
\begin{enumerate}
\item[\textup{(1)}] The maps $F$, $G$, and $F\circ G$ have super-attracting fixed points at $z=0$. In particular, $P$ is a super-attracting fixed point of $f\circ g$. Moreover, the Taylor series
$T_F,T_G,T_{F\circ G}$ of $F$, $G$, and $F\circ G$ about $z=0$ are nonzero elements of $z^2\mathbb{C}[[z]]$ and
$2\leq\textup{ord}(T_F)\leq\deg(f)$.
\vspace{-.25cm}
\item[\textup{(2)}] We have that $T_{F\circ G}=T_F\circ T_G$,
where the composition on the right is given by formal substitution as in Lemma \ref{lem:basic}.
\vspace{.2cm}
\item[\textup{(3)}] We have that $F=G$ if and only if $T_F=T_G$.\vspace{.1cm} 
\end{enumerate}
In particular, the set
\[\mathcal{S}_P=\{f\in\mathbb{C}(z)\,:\,\deg(f)>0\ \text{and $P$ is a super-attracting fixed point of $f$}\}\]
is a semigroup under composition, and the map $T_{P,L}:\mathcal{S}_P\rightarrow z^2\mathbb{C}[[z]]$ given by $f\mapsto T_{L^{-1}\circ f\circ L}$ is an injective semigroup homomorphism.
\end{lem}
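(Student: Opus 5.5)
The approach is to pass everything through conjugation by $L$ and then use basic facts about convergent Taylor series at a point. Since $L(0)=P$ and $P$ is fixed by $f$, we get $F(0)=L^{-1}(f(P))=L^{-1}(P)=0$, and likewise $G(0)=0$. By the conjugation invariance of multipliers recalled before the lemma, $F'(0)=0$ and $G'(0)=0$. Moreover $F\circ G=L^{-1}\circ(f\circ g)\circ L$ fixes $0$, and the chain rule gives $(F\circ G)'(0)=F'(0)G'(0)=0$. Hence $P$ is super-attracting for $f\circ g$, again by conjugation invariance. Since $F$ is a non-constant rational function holomorphic near $0$, its Taylor series $T_F$ converges near $0$, is nonzero, and lies in $z^2\mathbb{C}[[z]]$ because $F(0)=F'(0)=0$. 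The same holds for $G$ and $F\circ G$. For the bound $\ord(T_F)\le\deg(f)$, set $d=\ord(T_F)$. Then the equation $F(z)=w$ has $d$ solutions near $0$ for small $w\neq0$ (local normal form $z^d$ up to a holomorphic change of coordinates). Since $F$ has at most $\deg(F)=\deg(f)$ preimages of any point, this forces $d\le\deg(f)$. Alternatively, $0$ is a zero of multiplicity $d$ of the numerator of $F$, which has degree at most $\deg(f)$.

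For (2), I would argue analytically. Composition of convergent power series with $T_G(0)=0$ corresponds to composition of the holomorphic germs, because the formal substitution in Lemma~\ref{lem:basic} agrees with the Taylor expansion of $F\circ G$. One clean way to see this is to truncate: the coefficient of $z^k$ in $T_F\circ T_G$ depends only on finitely many coefficients, namely those of degree at most $k$ in each factor. Because $T_G$ has no constant term, the tails $O(z^{k+1})$ of both series contribute only $O(z^{k+1})$ after substitution. So the $k$-th coefficients agree with those of the genuine holomorphic composition, computed by Cauchy estimates or by the polynomial case plus error terms.

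For (3), the forward direction is trivial. For the converse, if $T_F=T_G$ then $F$ and $G$ agree on a neighborhood of $0$. By the identity theorem for meromorphic functions on the connected space $\mathbb{P}^1$, this gives $F=G$.

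The final assertions follow from these pieces. Closure of $\mathcal{S}_P$ under composition is part (1), together with the fact that the composition of non-constant maps is non-constant. The homomorphism property of $T_{P,L}$ uses $L^{-1}\circ(f\circ g)\circ L=F\circ G$ together with (2). Injectivity combines (3) with the injectivity of $f\mapsto L^{-1}\circ f\circ L$. The main obstacle I expect is making (2) rigorous without handwaving; I would present it through the truncation argument above.
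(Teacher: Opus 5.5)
Your proof is correct, but for parts (2) and (3) it takes an analytic route where the paper's proof is purely algebraic.

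\textbf{The paper's route.} The paper never uses convergence. It works in the subring of $\mathbb{C}(z)$ of functions $a/b$ with $b(0)\neq0$, on which $h\mapsto T_h$ is a ring homomorphism with $T_{a/b}=a\,b^{-1}$. Part (2) is then a one-line formal identity. With $F=p/q$ and the substitution homomorphism $\sigma(\phi)=\phi\circ T_G$, one has $T_{F\circ G}=\sigma(p)\,\sigma(q)^{-1}=\sigma(p\,q^{-1})=T_F\circ T_G$. Part (3) follows by writing $F-G=r/s$, so that $r\,s^{-1}=0$ forces $r=0$. Part (1) is essentially your argument, and the paper's order bound is exactly your second (numerator) argument.

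\textbf{Your route.} You rely on holomorphic germs, the identity theorem, and local degree. This makes (2) the delicate step, as you anticipated. Your truncation sketch does close it once stated precisely:
\begin{itemize}
\item[] Write $F(w)=P_k(w)+w^{k+1}h(w)$, with $P_k$ the degree-$k$ Taylor polynomial and $h$ holomorphic near $0$.
\item[] Then $F\circ G=P_k\circ G+G^{k+1}\,(h\circ G)$, and the second term vanishes to order at least $k+1$ since $G(0)=0$.
\item[] Next, $T_{P_k\circ G}=P_k\circ T_G$, because taking Taylor series respects sums and products.
\item[] Formally, $T_F\circ T_G\equiv P_k\circ T_G \pmod{z^{k+1}}$, so the two series agree modulo $z^{k+1}$ for every $k$.
\end{itemize}

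\textbf{Comparison.} The algebraic approach buys brevity and independence from analysis. It works verbatim for rational functions regular at $0$ over any field, in the spirit of Theorem~\ref{thm:free+power+series}. Your approach is more geometric, for example reading $\ord(T_F)$ as a local degree bounded by the global degree. Its cost is that you must justify that formal substitution agrees with composition of convergent series.
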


\begin{proof}
Throughout, we say that $h\in\mathbb{C}(z)$ is \emph{regular at $0$} if $h=a/b$ for some $a,b\in\mathbb{C}[z]$ with $b(0)\neq0$. Such functions form a subring of $\mathbb{C}(z)$, and the map $h\mapsto T_h$ is a ring homomorphism from this subring to $\mathbb{C}[[z]]$. Note that polynomials are their own Taylor series, and that if $h=a/b$ as above, then $T_h=a\,b^{-1}$, where $b^{-1}$ denotes the inverse of $b$ in $\mathbb{C}[[z]]$, which exists since $b(0)\neq0$.

For statement (1), we note that by the remarks above, $F(0)=G(0)=0$ and $F'(0)=G'(0)=0$. Hence $(F\circ G)(0)=0$ and $(F\circ G)'(0)=F'(0)G'(0)=0$. Since $F\circ G=L^{-1}\circ(f\circ g)\circ L$, the point $P$ is a super-attracting fixed point of $f\circ g$. Now let $H\in\{F,G,F\circ G\}$. Since $H(0)=0$, we may write $H=p/q$ in lowest terms with $p,q\in\mathbb{C}[z]$ and $q(0)\neq0$. In particular, $H$ is regular at $0$, and the constant and linear coefficients of $T_H$ are $H(0)=0$ and $H'(0)=0$, so $T_H\in z^2\mathbb{C}[[z]]$. Moreover, $H$ is non-constant, so $p\neq0$,
and hence $T_H=p\,q^{-1}\neq0$. Finally, since $q^{-1}$ is a unit in
$\mathbb{C}[[z]]$ and conjugation preserves degree, we have
\[
2\leq\textup{ord}(T_F)=\textup{ord}(p)\leq\deg(p)\leq\deg(F)=\deg(f)
\]
when $H=F$.

For statement (2), since $T_G\in z\mathbb{C}[[z]]$, the substitution map $\sigma(\phi):=\phi\circ T_G$ is a ring homomorphism $\mathbb{C}[[z]]\to\mathbb{C}[[z]]$; note that $\sigma(a)=a\circ T_G$ for $a\in\mathbb{C}[z]$. Write $F=p/q$ with $p,q\in\mathbb{C}[z]$ and $q(0)\neq0$, so that $T_F=p\,q^{-1}$ in $\mathbb{C}[[z]]$. Now $F\circ G=(p\circ G)(q\circ G)^{-1}$, where $p\circ G$ and $q\circ G$ are regular at $0$ and $(q\circ G)(0)=q(0)\neq0$. Therefore
\[
T_{F\circ G}=T_{p\circ G}\,T_{q\circ G}^{-1}
=\sigma(p)\,\sigma(q)^{-1}=\sigma(p\,q^{-1})=\sigma(T_F)=T_F\circ T_G,
\]
where the first and second equalities use that $h\mapsto T_h$ is a ring homomorphism, and the third uses that $\sigma$ is.

For statement (3), if $F=G$, then clearly $T_F=T_G$. Conversely, suppose that $T_F=T_G$. Since $F$ and $G$ are regular at $0$, so is $F-G$; write $F-G=r/s$ with $r,s\in\mathbb{C}[z]$ and $s(0)\neq0$. Then
\[
r\,s^{-1}=T_{F-G}=T_F-T_G=0
\]
in $\mathbb{C}[[z]]$. Multiplying by $s$ gives $r=0$, and hence $F=G$.

Finally, if $f,g\in\mathcal{S}_P$, then $\deg(f\circ g)=\deg(f)\deg(g)>0$,
and $P$ is a super-attracting fixed point of $f\circ g$ by (1); hence $\mathcal{S}_P$ is closed under composition. Moreover, $z^2\mathbb{C}[[z]]$ is a semigroup under composition by Lemma \ref{lem:basic}, and $T_{P,L}(f)\in z^2\mathbb{C}[[z]]$ by (1). Since $L^{-1}\circ(f\circ g)\circ L=F\circ G$, statement (2) gives
$T_{P,L}(f\circ g)=T_{P,L}(f)\circ T_{P,L}(g)$. Injectivity follows from (3), since $f=L\circ F\circ L^{-1}$.
\end{proof}

In what follows, if $f\in\mathbb{C}(z)$ is non-constant with super-attracting fixed point $P$ and $L\in\textup{PGL}_2(\mathbb{C})$ is such that $L(0)=P$, then we call the leading coefficient of the Taylor series $T_{L^{-1}\circ f\circ L}$ centered about $z=0$,  
\[\mathfrak{b}_{P,L}(f)=\textup{lc}\big(T_{L^{-1}\circ f\circ L}\big),\]
the \emph{B\"{o}ttcher coefficient} of $f$ with respect to $P$ and $L$. In particular, with the results above in place, we have the following criterion for freeness of semigroups of maps with a common super-attracting fixed point.
\begin{cor}\label{cor:Bottcher}
Let $f_1,\dots,f_r\in \mathbb{C}(z)$ be rational functions all of degree at least $2$ and all with a common super-attracting fixed point $P\in\mathbb{P}^1(\mathbb{C})$. Choose $L\in\textup{PGL}_2(\mathbb{C})$ such that $L(0)=P$. If the multiplicative semigroup $\langle \mathfrak{b}_{P,L}(f_1),\dots, \mathfrak{b}_{P,L}(f_r)\rangle$ in $\mathbb{C}^\times$ is free commutative of rank $r$, then $\langle f_1,\dots,f_r\rangle$ is free of rank $r$.       
\end{cor}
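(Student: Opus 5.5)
The plan is to transfer the problem to power series via the injective homomorphism of Lemma \ref{lem:taylor} and then invoke Theorem \ref{thm:free+power+series} with $K=\mathbb{C}$.

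First, each $f_i$ lies in the semigroup $\mathcal{S}_P$, since it has positive degree and $P$ as a super-attracting fixed point. By Lemma \ref{lem:taylor}, the map $T_{P,L}:\mathcal{S}_P\to z^2\mathbb{C}[[z]]$ is an injective semigroup homomorphism. Set $g_i:=T_{P,L}(f_i)=T_{L^{-1}\circ f_i\circ L}$. By part (1) of that lemma these are nonzero elements of $z^2\mathbb{C}[[z]]$, and by definition $\textup{lc}(g_i)=\mathfrak{b}_{P,L}(f_i)$. The hypothesis that $\langle \mathfrak{b}_{P,L}(f_1),\dots,\mathfrak{b}_{P,L}(f_r)\rangle$ is free commutative of rank $r$ is exactly the statement that these leading coefficients are multiplicatively independent. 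Theorem \ref{thm:free+power+series} then says that $\langle g_1,\dots,g_r\rangle$ is free of rank $r$.

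Next, I pull freeness back along $T_{P,L}$. Suppose
\[
\theta_1\circ\dots\circ\theta_n=\tau_1\circ\dots\circ\tau_m
\]
with all $\theta_k,\tau_l\in\{f_1,\dots,f_r\}$. Applying the homomorphism $T_{P,L}$ gives the same identity between the corresponding words in the $g_i$. Freeness of $\langle g_1,\dots,g_r\rangle$ forces $n=m$ and $T_{P,L}(\theta_k)=T_{P,L}(\tau_k)$ for every $k$. Injectivity of $T_{P,L}$ then gives $\theta_k=\tau_k$.

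It remains to check that the $f_i$ are pairwise distinct, so that the rank is really $r$. This follows because the $g_i$ are distinct: their leading coefficients are multiplicatively independent, hence distinct. No step is a genuine obstacle, since all the analytic content is contained in Lemma \ref{lem:taylor}. The only care needed is to note that the degree assumption $\deg f_i\ge 2$ is not used beyond guaranteeing membership in $\mathcal{S}_P$; the order bound $\ord(g_i)\ge 2$ comes from super-attraction.
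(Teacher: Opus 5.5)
Your proposal is correct and follows essentially the same route as the paper. Both arguments transport the semigroup into $z^2\mathbb{C}[[z]]$ via the injective homomorphism $T_{P,L}$ of Lemma \ref{lem:taylor}, identify the leading coefficients there with the B\"{o}ttcher coefficients, and apply Theorem \ref{thm:free+power+series}; the paper is terser (noting only that a free commutative semigroup of rank $r$ on $r$ generators forces those generators to be a free basis, hence multiplicatively independent), while you also spell out the pullback of word identities and the distinctness of the $f_i$, which is fine.
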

\begin{proof} By Lemma \ref{lem:taylor}, the map $\langle f_1,\dots, f_r \rangle\rightarrow \langle T_{L^{-1}\circ f_1\circ L},\dots, T_{L^{-1}\circ f_r\circ L}\rangle $ given by $f\rightarrow T_{L^{-1}\circ f\circ L}$ is a semigroup isomorphism. On the other hand, if $\langle \mathfrak{b}_{P,L}(f_1),\dots, \mathfrak{b}_{P,L}(f_r)\rangle$ in $\mathbb{C}^\times$ is free commutative of rank $r$, then the given generators must be a free basis (hence, multiplicatively independent). Therefore, Theorem \ref{thm:free+power+series} implies that $\langle T_{L^{-1}\circ f_1\circ L},\dots, T_{L^{-1}\circ f_r\circ L}\rangle$ is free with the given basis, from which the desired claim follows.         
\end{proof}
In particular, we obtain the following improvement of \cite[Theorem 5.1]{MR4449715}. Specifically, we remove the condition that the degrees of the polynomials also be multiplicatively independent.  
\begin{cor}\label{cor:poly} Let $K$ be a field of characteristic zero and let $f_1,\dots,f_r\in K[x]$ be polynomials of degree at least $2$. If the leading coefficients of the $f$'s are multiplicatively independent, then $\langle f_1,\dots,f_r\rangle$ is a free semigroup of rank $r$.
\end{cor}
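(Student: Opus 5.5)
The plan is to realize polynomials as maps with a common super-attracting fixed point at $\infty$ and apply Theorem \ref{thm:free+power+series} to their local expansions there. The Böttcher coefficient at $\infty$ should be closely tied to the leading coefficient of the polynomial. Since $K$ has characteristic zero, a direct power-series argument should work over $K$ itself, so I will not pass through $\mathbb{C}$ and Corollary \ref{cor:Bottcher}. That route would in any case require embedding $K$ into $\mathbb{C}$, which is only possible when $K$ has cardinality at most that of $\mathbb{C}$.

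Concretely, let $L(z)=1/z$. For a polynomial $f=a_dx^d+\dots+a_0$ with $d\ge2$ and $a_d\neq0$, set
\[
F(z)=\frac{1}{f(1/z)}=\frac{z^d}{a_d+a_{d-1}z+\dots+a_0z^d}.
\]
Since the denominator has nonzero constant term $a_d$, it is invertible in $K[[z]]$. Hence $T_f:=z^d\,(a_d+a_{d-1}z+\dots+a_0z^d)^{-1}$ lies in $z^2K[[z]]$, has order $d$, and has leading coefficient $a_d^{-1}$.

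I then need the algebraic analogues of Lemma \ref{lem:taylor} over $K$:
\begin{itemize}
\item[(a)] $T_{f\circ g}=T_f\circ T_g$;
\item[(b)] $f\mapsto T_f$ is injective on polynomials of degree at least $2$.
\end{itemize}
For (a), substitution $\phi\mapsto\phi\circ T_g$ is a ring homomorphism of $K[[z]]$, and formally $1/f(1/z)$ composed with $1/g(1/z)$ equals $1/f(g(1/z))$. Rigorously, I would work in $K((z))$: $g(1/z)$ is a Laurent polynomial, $1/g(1/z)=T_g$, and $f(1/T_g)=f(g(1/z))$ as elements of $K((z))$. Taking reciprocals then gives $T_f\circ T_g=T_{f\circ g}$; here one needs that substitution of $T_g$ extends to a field homomorphism $K((z))\to K((z))$, which holds because $\ord T_g\ge1$. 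For (b), $T_f$ determines $f(1/z)=1/T_f$ in $K((z))$, and hence determines $f$. This yields an injective semigroup homomorphism from $\langle f_1,\dots,f_r\rangle$ onto $\langle T_{f_1},\dots,T_{f_r}\rangle$.

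The leading coefficients of the $T_{f_i}$ are $\textup{lc}(f_i)^{-1}$. These are multiplicatively independent exactly when the $\textup{lc}(f_i)$ are, since a relation $\prod a_i^{-n_i}=1$ is the same as $\prod a_i^{n_i}=1$ with the exponent vector negated. Theorem \ref{thm:free+power+series} then shows that $\langle T_{f_1},\dots,T_{f_r}\rangle$ is free of rank $r$ on these generators. Pulling this back through the isomorphism gives the claim.

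The main obstacle is justifying (a) cleanly in $K((z))$, in particular that substitution of a series of positive order is a well-defined ring homomorphism extending to Laurent series. That said, the same argument appears to work in any characteristic, so characteristic zero is presumably only needed to match the cited earlier setting.
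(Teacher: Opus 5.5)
Your proof is correct, but it takes a different route from the paper.

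\textbf{The paper's route.} It first reduces to $K=\mathbb{C}$ by embedding into $\mathbb{C}$ the subfield $K_0$ generated over $\mathbb{Q}$ by the coefficients of the $f_i$. This is the only place characteristic zero is used. It then invokes Corollary \ref{cor:Bottcher} with $P=\infty$ and $L(z)=1/z$. The homomorphism property and injectivity of $f\mapsto T_{L^{-1}\circ f\circ L}$ therefore come from Lemma \ref{lem:taylor}, which is stated for complex rational functions. The only new computation is that the B\"ottcher coefficient equals $1/a_d$.

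\textbf{Your route.} You define $T_f=1/f(1/z)$ directly in $K[[z]]$ and prove (a) and (b) by hand in $K((z))$. This is essentially the substitution-homomorphism argument of Lemma \ref{lem:taylor}(2), specialized to polynomials at $\infty$. You then apply Theorem \ref{thm:free+power+series} over $K$ itself.

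The step you flag as the main obstacle is routine. By Lemma \ref{lem:basic}, $\ord(\phi\circ T_g)=\ord(\phi)\ord(T_g)$. Hence $\phi\mapsto\phi\circ T_g$ is an injective ring homomorphism from the domain $K[[z]]$ into the field $K((z))$. It therefore extends uniquely to the fraction field $K((z))$, sending $1/z$ to $1/T_g=g(1/z)$.

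\textbf{Comparison.} Your route gives a self-contained, purely algebraic proof that works over every field. So your observation that the characteristic-zero hypothesis is superfluous is correct. The paper's route buys uniformity with Corollary \ref{cor:Bottcher}, which handles arbitrary rational maps sharing a super-attracting fixed point.

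\textbf{A correction to your framing.} The paper's route does not require embedding all of $K$ into $\mathbb{C}$. It only embeds the finitely generated field $K_0$, which always embeds when $K$ has characteristic zero. So cardinality is no obstruction to that route; its genuine limitation is the characteristic.
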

\begin{proof} Let $K_0\subseteq K$ be the subfield generated over $\mathbb{Q}$ by the coefficients of $f_1,\dots,f_r$. Since $K_0$ is finitely generated over $\mathbb{Q}$, there is a field embedding $\iota:K_0\hookrightarrow\mathbb{C}$: write $K_0$ as a finite extension of a purely transcendental extension $\mathbb{Q}(t_1,\dots,t_n)$, send $t_1,\dots,t_n$ to algebraically independent complex numbers, and extend to $K_0$ using that $\mathbb{C}$ is
algebraically closed; see, e.g., \cite[Chapter~V, Theorem~2.8]{Lang}. Applying $\iota$ to coefficients gives an injective map
$K_0[z]\to\mathbb{C}[z]$, $f\mapsto f^\iota$, that preserves degrees and composition, and hence induces a semigroup isomorphism
$\langle f_1,\dots,f_r\rangle\cong\langle f_1^\iota,\dots,f_r^\iota\rangle$. Moreover, since $\iota$ restricts to an injective homomorphism $K_0^\times\to\mathbb{C}^\times$ and sends the leading coefficient of $f_i$ to that of $f_i^\iota$, the leading coefficients of $f_1^\iota,\dots,f_r^\iota$ are multiplicatively independent. Thus, we may assume that $K=\mathbb{C}$.

Polynomials of degree at least $2$ have a common super-attracting fixed point at $P=\infty$. In particular, $L(z)=1/z$ fits the description of Lemma \ref{lem:taylor}. On the other hand, it is straightforward to check that if $f=a_d z^d+\dots +a_0$ for some $a_i\in\mathbb{C}$ with $a_d\neq0$, then the Taylor series of $L^{-1}\circ f\circ L$ satisfies 
\[T_{L^{-1}\circ f\circ L}=\frac{1}{a_d}z^d+O(z^{d+1}).\]
Hence, $\mathfrak{b}_{\infty,L}(f_i)$ is the inverse of the usual leading coefficient of the polynomial $f_i$. Moreover, the set of complex numbers $\mathfrak{b}_{\infty,L}(f_1), \dots, \mathfrak{b}_{\infty,L}(f_r)$ is multiplicatively independent if and only if $\mathfrak{b}_{\infty,L}(f_1)^{-1}, \dots, \mathfrak{b}_{\infty,L}(f_r)^{-1}$ is multiplicatively independent. The claim follows.    
\end{proof} 
Finally, we note that the proportion of tuples of polynomials over a fixed number field $K$ for which the corresponding semigroup is not free is quite small (when we order coefficients by height and view the degrees as fixed). To make this statement more precise, we define the height $H(f)$ of a polynomial $f(z)=a_dz^d+\dots+a_0\in K[z]$ to be 
\[H(f):=\max\{H(a_0),\dots, H(a_d)\},\] 
i.e., the max height of $f$'s coefficients; see \cite[\S3.1]{SilvDyn} for the definition of the absolute Weil height $H(a)$ for $a\in K$. Moreover, for a given sequence of degrees $d_1,\dots, d_r$, we let $\mathcal{P}_{d_1,\dots,d_r}(K,x)$ denote the set of $r$-tuples of polynomials over $K$ with the given degrees whose components all have height at most $x$:
\[\mathcal{P}_{d_1,\dots,d_r}(K,x):=\Big\{(f_1,\dots,f_r)\,:\, f_i\in K[z],\; \deg(f_i)=d_i,\;\text{and}\; H(f_i)\leq x\;\; \text{for all $1\leq i\leq r$}\Big\}. 
\vspace{.1cm} 
\]
Likewise, we let $\mathcal{P}_{d_1,\dots,d_r}(K,x)^+$ be the set of tuples of polynomials with the same defining characteristics as $\mathcal{P}_{d_1,\dots,d_r}(K,x)$ except that we allow $\deg(f_i)\leq d_i$, instead of forcing an equality; that is, we allow zeros in certain slots. Finally, we let
\[\mathcal{N}_{d_1,\dots,d_r}(K,x):=\{(f_1,\dots, f_r)\in \mathcal{P}_{d_1,\dots,d_r}(K,x)\,:\, \langle f_1,\dots,f_r\rangle\;\text{is not  free of rank $r$}\}
\vspace{.1cm}
\]
denote the collections of such polynomials that do not generate a free semigroup (on the given basis) under composition. Then the following statement is a consequence of Corollary \ref{cor:poly} and \cite[Theorem 1.2]{PappalardiShaShparlinskiStewart2018}.   
\begin{cor}{\label{cor:non-free-proportion}} Let $K$ be a fixed number field and let $r,d_1,\dots,d_r\geq2$. Then 
\vspace{.1cm}
\[\frac{\# \mathcal{N}_{d_1,\dots,d_r}(K,x)}{\#\mathcal{P}_{d_1,\dots,d_r}(K,x)}=O\big(x^{-2[K:\mathbb{Q}]}\big). 
\vspace{.1cm}\]
In particular, 
\[\displaystyle{\lim_{x\rightarrow\infty}\ \frac{\#\mathcal{N}_{d_1,\dots,d_r}(K,x)}{\#\mathcal{P}_{d_1,\dots,d_r}(K,x)}=0}.
\vspace{.1cm} 
\] 
Thus, when ordered by height, most collections of polynomials over $K$ of a given degree sequence generate free semigroups (on the given basis) under composition.  
\end{cor}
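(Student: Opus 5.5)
By Corollary \ref{cor:poly}, a tuple $(f_1,\dots,f_r)$ whose leading coefficients are multiplicatively independent generates a free semigroup of rank $r$. So $\mathcal{N}_{d_1,\dots,d_r}(K,x)$ is contained in the set of tuples in $\mathcal{P}_{d_1,\dots,d_r}(K,x)$ whose leading coefficients $a_{1},\dots,a_{r}$ are multiplicatively dependent. The problem therefore reduces to counting tuples with multiplicatively dependent leading coefficients. The natural tool is \cite[Theorem 1.2]{PappalardiShaShparlinskiStewart2018}, which counts multiplicatively dependent $r$-tuples of elements of $K^\times$ of height at most $x$. I expect it to give a count of the shape $O\big(x^{[K:\mathbb{Q}](r+1)}\big)$, up to the precise form of the statement there, against a total of $\asymp x^{[K:\mathbb{Q}]\cdot 2r}$ tuples. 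Equivalently, the saving over the trivial bound $x^{2r[K:\mathbb{Q}]}$ for $r$-tuples is $x^{-(r-1)[K:\mathbb{Q}]}$, which is at least $x^{-[K:\mathbb{Q}]}$.

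The plan is a fibration over the leading coefficients. Write $n=[K:\mathbb{Q}]$. By Schanuel's theorem, the number of $a\in K$ with $H(a)\le x$ is asymptotically $c_K x^{2n}$. A tuple in $\mathcal{P}_{d_1,\dots,d_r}(K,x)$ is determined by its $r$ nonzero leading coefficients together with $\sum_i d_i$ remaining coefficients, each ranging over the elements of $K$ of height at most $x$. Hence
\[\#\mathcal{P}_{d_1,\dots,d_r}(K,x)\asymp x^{2n(r+\sum_i d_i)}.\]
Counting $\mathcal{N}$ by first choosing a multiplicatively dependent leading tuple and then the free lower coefficients gives
\[\#\mathcal{N}_{d_1,\dots,d_r}(K,x)\le M_r(K,x)\cdot O\big(x^{2n\sum_i d_i}\big),\]
where $M_r(K,x)$ is the number of multiplicatively dependent $r$-tuples in $(K^\times)^r$ of height at most $x$. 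The ratio is therefore $O\big(M_r(K,x)/x^{2nr}\big)$.

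The final step is to bound $M_r(K,x)$ by $O\big(x^{2n(r-1)}\big)$ using \cite{PappalardiShaShparlinskiStewart2018}, which gives the claimed ratio $O(x^{-2n})$. The main obstacle is matching the normalizations. The cited count may use the absolute height or the relative height $H_K=H^{n}$, and it may be stated for algebraic integers. I would translate it into our height and check that the dominant contribution has order exactly $x^{2n(r-1)}$.

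Heuristically this is the right order. The dominant dependent tuples are those with one coordinate a root of unity, and more generally those with one coordinate determined up to finitely many choices by the others in a rank-one relation; either way one factor of $x^{2n}$ is lost. If the cited theorem instead gives only a slightly weaker exponent, say with logarithmic factors, I would handle the root-of-unity case directly, which gives $O(x^{2n(r-1)})$, and bound genuine relations $a_1^{k_1}\cdots a_r^{k_r}=1$ with some $|k_i|\ge1$ using their estimate. The limit statement then follows immediately.
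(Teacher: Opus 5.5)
Your proposal is correct and is essentially the paper's proof: fiber $\mathcal{P}_{d_1,\dots,d_r}(K,x)$ over the tuple of leading coefficients, use Corollary \ref{cor:poly} to confine $\mathcal{N}_{d_1,\dots,d_r}(K,x)$ to the fibers over multiplicatively dependent tuples, bound the number of those tuples by $O(x^{2n(r-1)})$ via \cite[Theorem 1.2]{PappalardiShaShparlinskiStewart2018}, and divide by $B_K(x)^r\asymp x^{2nr}$ from Schanuel (the paper notes that every fiber is the same set, so the lower-coefficient count cancels exactly and needs no asymptotic of its own). One correction: the guess $O(x^{n(r+1)})$ and the ``saving $x^{-(r-1)n}$'' in your first paragraph are inconsistent with this and would give only $O(x^{-n})$ when $r=2$; the true order is $x^{2n(r-1)}$, dominated by tuples with a root-of-unity coordinate, so the saving is exactly $x^{-2n}$ for every $r$, which is what your final paragraph correctly uses.
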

\begin{proof} We begin with some notation. Write $D=[K:\mathbb{Q}]$ and $\mathcal{P}^+=\mathcal{P}_{d_1-1,\dots,d_r-1}(K,x)^+$, and consider the map $\pi:\mathcal{P}_{d_1,\dots,d_r}(K,x)\to(K^\times)^r$ given by $\pi(f_1,\dots,f_r)=(\textup{lc}(f_1),\dots,\textup{lc}(f_r))$. By Corollary \ref{cor:poly}, $\pi(\mathcal{N}_{d_1,\dots,d_r}(K,x))$ is contained in the set $\mathcal{L}_{r,K}(x)$ of multiplicatively dependent vectors in
$(K^\times)^r$ whose coordinates have height at most $x$; in the notation of \cite{PappalardiShaShparlinskiStewart2018}, $\#\mathcal{L}_{r,K}(x)=L^*_{r,K}(x)$. Moreover, writing each polynomial as its leading term plus a polynomial of lower degree identifies every fiber of $\pi$ with $\mathcal{P}^+$. Hence, by
\cite[Theorem 1.2]{PappalardiShaShparlinskiStewart2018},
\[
\#\mathcal{N}_{d_1,\dots,d_r}(K,x)
\leq\sum_{v\in\mathcal{L}_{r,K}(x)}\#\pi^{-1}(v)
=L^*_{r,K}(x)\cdot\#\mathcal{P}^+
=O\big(x^{2D(r-1)}\big)\cdot\#\mathcal{P}^+ .
\]
On the other hand, the same identification gives
\[
\#\mathcal{P}_{d_1,\dots,d_r}(K,x)=B_K(x)^r\cdot\#\mathcal{P}^+,
\qquad\text{where } B_K(x):=\#\{a\in K^\times: H(a)\leq x\},
\]
and Schanuel's theorem \cite{schanuel1979heights} (see also
\cite[(1.5)]{PappalardiShaShparlinskiStewart2018} for the formulation with the absolute height) gives $B_K(x)=C(K)x^{2D}+o(x^{2D})$ for some constant
$C(K)>0$. Dividing the two estimates above proves the claim.
\end{proof}
\begin{rmk} In fact, it may be possible to make the corresponding lower bound on the the proportion of free semigroups larger by allowing a change of variables first (e.g., a substitution of the form $z=cy$ for some $c\in K^\times$) when proving freeness in Corollary \ref{cor:poly}. In particular, it is possible that this type of argument could lead to a power saving estimate on  $\mathcal{N}_{d_1,\dots,d_r}(K,x)$ in Corollary \ref{cor:non-free-proportion}.   \end{rmk}

\section*{A second proof of Theorem \ref{thm:free+power+series}}
The following argument is due to Fedor Pakovich. Let $f_1,\dots,f_r$ satisfy the conditions of Theorem \ref{thm:free+power+series}, and write $a_s=\textup{lc}(f_s)$ and $d_s=\ord(f_s)$ for $1\leq s\leq r$; in particular, $d_s\geq2$ for all $s$. Consider a word
\[
 W=f_{i_1}\circ\cdots\circ f_{i_n}
\]
and set
\begin{equation}\label{eq:D}
 D(W)=d_{i_1}d_{i_2}\cdots d_{i_n}.
\end{equation}
Then $\ord(W)=D(W)$, and repeated application of Lemma \ref{lem:basic} gives
\begin{equation}\label{eq:E-prefix}
 \textup{lc}(W)=\prod_{s=1}^r a_s^{E_s(W)},
\end{equation}
where, for $s\in\{1,\ldots,r\}$,
\begin{equation}\label{eq:E}
 E_s(W)=\sum_{\substack{1\le k\le n\\i_k=s}}
       \prod_{\ell<k}d_{i_\ell}
       =\sum_{\substack{1\le k\le n\\i_k=s}}
       \frac{D(W)}{d_{i_k}d_{i_{k+1}}\cdots d_{i_n}}.
\end{equation}
Note that all the fractions in~\eqref{eq:E} are integers, independently
of the field $K$.

Multiplicative independence of the coefficients implies that
\begin{equation}\label{eq:exponent-equality}
 \textup{lc}(W)=\textup{lc}(V)\quad\Longrightarrow\quad
 E_s(W)=E_s(V)\ \text{for all }s.
\end{equation}

We use the coefficient estimate from the proof of Theorem~2.3
in~\cite[pp.~13835--13836]{pakovich2022sharing}.
Set $D=D(W)$. For each fixed $s$, we have
\begin{align}
 W\text{ ends in }f_s
 &\quad\Longrightarrow\quad E_s(W)\ge\frac{D}{d_s},
 \label{eq:last-s}\\
 W\text{ ends in }f_j,\quad j\ne s
 &\quad\Longrightarrow\quad E_s(W)<\frac{D}{d_s}.
 \label{eq:last-other}
\end{align}
Indeed, if $i_n=s$, then the term $k=n$ in~\eqref{eq:E} equals $D/d_s$, which gives~\eqref{eq:last-s}. On the other hand, if $i_n\neq s$, then every index $k$ with $i_k=s$ satisfies $k<n$, and the corresponding term in~\eqref{eq:E} is
\[
\frac{D}{d_s}\prod_{\ell=k+1}^{n}\frac{1}{d_{i_\ell}}\leq\frac{D}{d_s}\cdot\frac{1}{2^{\,n-k}},
\]
since $d_{i_\ell}\geq2$ for all $\ell$. Summing over distinct $k<n$ gives $E_s(W)\leq\frac{D}{d_s}\big(1-2^{-(n-1)}\big)<\frac{D}{d_s}$, which proves~\eqref{eq:last-other}.

Now suppose that two nonempty words $W,V$ represent the same formal power series.
Their orders are equal to some $D$, and their leading coefficients coincide.
Hence all the exponents $E_s$ coincide as well.

If $W$ ends in $f_s$ and $V$ ends in $f_j$ with $j\ne s$, then
\[
 E_s(W)\ge D/d_s>E_s(V),
\]
contrary to the equality of the exponents.
Thus both words end in the same letter $f_s$:
\[
 W=W'\circ f_s,\qquad V=V'\circ f_s.
\]
Cancelling $f_s$ on the right gives $W'=V'$ by Lemma \ref{lem:basic}.
We continue cancelling the common final letters. Both words must be exhausted
simultaneously: otherwise, the identity series of order $1$ would equal
a nonempty word of order at least~$2$. \qed

\medskip
\textbf{Acknowledgments:} We thank the American Institute of Mathematics for hosting the workshop, Dynamics of Multiple Maps, in November 2025, which inspired some of the ideas in this paper. We also thank Fedor Pakovich for helpful correspondence and for allowing us to include his proof of Theorem \ref{thm:free+power+series}.
\bibliographystyle{plain}
\bibliography{main}

@article {MR4780496,
    AUTHOR = {Bell, Jason P. and Huang, Keping and Peng, Wayne and Tucker,
              Thomas J.},
     TITLE = {A {T}its alternative for endomorphisms of the projective line},
   JOURNAL = {J. Eur. Math. Soc. (JEMS)},
  FJOURNAL = {Journal of the European Mathematical Society (JEMS)},
    VOLUME = {26},
      YEAR = {2024},
    NUMBER = {12},
     PAGES = {4903--4922},
      ISSN = {1435-9855,1435-9863},
   MRCLASS = {20M05 (11G50 14H37)},
  MRNUMBER = {4780496},
MRREVIEWER = {Fei\ Hu},
       DOI = {10.4171/jems/1376},
       URL = {https://doi.org/10.4171/jems/1376},
}

@article {MR4449715,
    AUTHOR = {Hindes, Wade},
     TITLE = {Counting points of bounded height in monoid orbits},
   JOURNAL = {Math. Z.},
  FJOURNAL = {Mathematische Zeitschrift},
    VOLUME = {301},
      YEAR = {2022},
    NUMBER = {4},
     PAGES = {3395--3416},
      ISSN = {0025-5874,1432-1823},
   MRCLASS = {37P15 (11D45 11G50 37P05)},
  MRNUMBER = {4449715},
       DOI = {10.1007/s00209-022-03021-8},
       URL = {https://doi.org/10.1007/s00209-022-03021-8},
}

@article{PappalardiShaShparlinskiStewart2018,
  author    = {Pappalardi, Francesco and Sha, Min and Shparlinski, Igor E. and Stewart, Cameron L.},
  title     = {On multiplicatively dependent vectors of algebraic numbers},
  journal   = {Transactions of the American Mathematical Society},
  volume    = {370},
  number    = {9},
  pages     = {6173--6196},
  year      = {2018},
  doi       = {10.1090/tran/7178},
  url       = {https://doi.org}
}

@book{de2000topics,
  title={Topics in geometric group theory},
  author={de La Harpe, Pierre},
  year={2000},
  publisher={University of Chicago Press}
}

@article{Zieve,
  title={Functional equations in polynomials},
  author={Jiang, Zhan and Zieve, Michael E},
  journal={arXiv preprint arXiv:2008.09554},
  year={2020}
}

@article{schanuel1979heights,
  title={Heights in number fields},
  author={Schanuel, Stephen Hoel},
  journal={Bulletin de la Soci{\'e}t{\'e} math{\'e}matique de France},
  volume={107},
  pages={433--449},
  year={1979}
}

@article{beaumont2025uniformtitsalternativeendomorphisms,
      title={A uniform Tits alternative for endomorphisms of the projective line}, 
      author={Alonso Beaumont},
      journal={Algebra \& Number Theory},
      note={To appear},
      year={2025},
      eprint={2504.14263},
      archivePrefix={arXiv},
      primaryClass={math.NT},
      url={https://arxiv.org},
}

@book {SilvDyn,
    AUTHOR = {Silverman, Joseph H.},
     TITLE = {The arithmetic of dynamical systems},
    SERIES = {Graduate Texts in Mathematics},
    VOLUME = {241},
 PUBLISHER = {Springer, New York},
      YEAR = {2007},
     PAGES = {x+511},
      ISBN = {978-0-387-69903-5},
   MRCLASS = {11-02 (11-01 11G05 11G07 11G50 37-02 37F10)},
  MRNUMBER = {2316407},
MRREVIEWER = {Thomas Ward},
       DOI = {10.1007/978-0-387-69904-2},
       URL = {https://doi.org/10.1007/978-0-387-69904-2},
}

@article{kolpakov2022free,
  title={On free semigroups of affine maps on the real line},
  author={Kolpakov, Alexander and Talambutsa, Alexey},
  journal={Proceedings of the American Mathematical Society},
  volume={150},
  number={6},
  pages={2301--2307},
  year={2022}
}

@article{pakovich2026right,
  title={Right amenability in semigroups of formal power series},
  author={Pakovich, Fedor},
  journal={Groups, Geometry, and Dynamics},
  volume={20},
  number={1},
  pages={205},
  year={2026},
  publisher={European Mathematical Society (EMS)}
}

@book{Lang,
  author    = {Lang, Serge},
  title     = {Algebra},
  edition   = {third},
  series    = {Graduate Texts in Mathematics},
  volume    = {211},
  publisher = {Springer-Verlag, New York},
  year      = {2002},
  pages     = {xvi+914},
  isbn      = {0-387-95385-X},
  doi       = {10.1007/978-1-4613-0041-0},
  mrnumber  = {1878556},
  note      = {Revised third edition}
}

@article{pakovich2022sharing,
  author        = {Pakovich, Fedor},
  title         = {Sharing a measure of maximal entropy in polynomial semigroups},
  journal       = {Int. Math. Res. Not. IMRN},
  year          = {2022},
  volume        = {2022},
  number        = {18},
  pages         = {13829--13840},
  eprint        = {2009.12261},
  archivePrefix = {arXiv}
}

\end{document}